\documentclass[11pt]{amsart}

\usepackage{amssymb}
\usepackage{bbm,titletoc} 
\usepackage[normalem]{ulem}
\usepackage[dvips]{graphicx}
\usepackage{float,enumitem}

\usepackage[usenames,dvipsnames]{color}

  \hfuzz=5.002pt 
  \vfuzz=8.002pt

\usepackage{tikz}
\usetikzlibrary {positioning}
\definecolor {processblue}{cmyk}{0.96,0,0,0}

\makeatletter
\def\moverlay{\mathpalette\mov@rlay}
\def\mov@rlay#1#2{\leavevmode\vtop{%
   \baselineskip\z@skip \lineskiplimit-\maxdimen
   \ialign{\hfil$#1##$\hfil\cr#2\crcr}}}

\makeatother

\numberwithin{equation}{section}

\definecolor{Ruta}{rgb}{0.309, 0.459, 0.208}
\definecolor{Vino}{rgb}{0.256,0,0}
\definecolor{Siva}{rgb}{0.116,0.116,0.116}
\definecolor{Siva'}{rgb}{0.250,0.116,0.116}

\usepackage{amsfonts}
\usepackage{amsmath}
\usepackage{verbatim}

\usepackage[pagebackref,colorlinks,linkcolor=red,citecolor=OliveGreen,urlcolor=blue,hypertexnames=true]{hyperref}

\usepackage{blkarray}
\usepackage{multirow}
\usepackage{xspace} 
\usepackage{multirow,bigdelim}

\newcommand{\cA}{\mathcal{A}}

\def\beq{ \begin{equation} }
\def\eeq{ \end{equation} }

\def\bep{\begin{proof}}
\def\eep{\end{proof}}

\def\ben{ \begin{enumerate} }
\def\een{ \end{enumerate} }

\newcommand{\id}{{\rm id}}

\newcommand{\ol}{\overline}

\DeclareMathOperator{\vspan}{span}

\addtolength{\textwidth}{3cm} \addtolength{\oddsidemargin}{-1.5cm}
\addtolength{\evensidemargin}{-1.5cm}
\allowdisplaybreaks[1]

\newtheorem{theorem}{Theorem}[section]
\newtheorem{proposition}[theorem]{Proposition}

\newtheorem{corollary}[theorem]{Corollary}

\theoremstyle{definition}

\newtheorem{lemma}[theorem]{Lemma}

\newtheorem{remark}[theorem]{Remark}




\newcommand{\s}{\sigma} 
\newcommand{\CC}{\mathbb{C}}
\newcommand{\RR}{\mathbb{R}}

\newcommand{\N}{\mathbb{N}}
\newcommand{\C}{C}

\newcommand{\F}{\mathbb{F}}

\newcommand{\la}{\langle}
\newcommand{\ra}{\rangle}

\newcommand{\st}{\;\ifnum\currentgrouptype=16 \middle\fi|\;}

\usepackage{graphicx}
\usepackage{calc}
\newlength{\depthofsumsign}
\setlength{\depthofsumsign}{\depthof{$\sum$}}

\makeatletter
\newcommand*{\rom}[1]{\expandafter\@slowromancap\romannumeral #1@}
\makeatother

\title[The length of a generic vector subspace of $M_n(\F)$]{Sweeping words and the length of a\\ generic vector subspace of $M_n(\F)$}

\author[I. Klep]{Igor Klep${}^{1}$}
\address{Igor Klep, Department of Mathematics, 
The University of Auckland, New Zealand}
\email{igor.klep@auckland.ac.nz}
\thanks{${}^1$Supported by the Marsden Fund Council from Government funding, administered by the Royal Society of New Zealand.
Partially supported by the Slovenian Research Agency grants P1-0222, L1-4292
and L1-6722. Part of this research was done while the author was on leave from the University of Maribor.}

\author[\v S. \v Spenko]{\v Spela \v Spenko${}^{2}$}
\address{\v Spela \v Spenko,  Faculty of Mathematics and Physics,  University of Ljubljana, Slovenia} \email{spela.spenko@fmf.uni-lj.si}
\thanks{${}^{2}$Supported by the Slovenian Research Agency and the L'Or\'eal-UNESCO scholarship ``For women in science".}

\subjclass[2010]{Primary 16R30, 68R15; Secondary 16S50, 15A24, 47A57}
\date{\today}
\keywords{free algebra, generic matrix, discriminant,  local linear independence, length of a vector space, Paz conjecture, directed multigraph}

\begin{document}

\begin{abstract}
The main result of this short note is a generic version of Paz's conjecture
on the lengths of generating sets in matrix algebras. 
Consider a generic $g$-tuple $A=(A_1,\ldots, A_g)$ of $n\times n$ matrices over a field. We show that
whenever $g^{2d}\geq n^2$, the set of all words of degree $2d$ in $A$ spans the full $n\times n$ matrix algebra. 
Our proofs use generic matrices, are  combinatorial and depend on the construction of a special kind of directed multigraphs with few edge-disjoint walks.
\end{abstract}

\maketitle
\section{Introduction}

Let $\F$ be a field and let $\cA$ be an associative $\F$-algebra.
Given a generating set $1\in S$ of $\cA$ let $S^k$ denote the set of all products of the form $s_1\cdots s_k$ with $s_i\in S$. 
If 
\[
\vspan S^{\ell-1} \subsetneq 
\vspan S^\ell=\cA,
\]
we say that
$S$ has (generating) {\bf length} $\ell$.
These lengths feature prominently in the
study of growth of algebras and the
Gelfand-Kirillov dimension \cite{KL,BK}.

A fundamental problem is to find bounds on the length of
generating sets. Much activity has focused on $\cA=M_n(\F)$
(see e.g.~\cite{Paz, FGG, Pap, LoRo, Ros}), 
where the best known bound on the length of generating sets is
$O(n^{3/2})$, due to Pappacena \cite{Pap}. The Paz conjecture \cite{Paz} states that the bound is $2n-2$, and it is easy to see that this bound would be sharp.
We refer the reader to \cite{LS} for the study of an analogous problem in groups.

In this short note
we establish a strengthening of the Paz conjecture 
in a generic setting: 
the length of a \emph{generic}
generating set in $M_n(\F)$ is $O(\log n)$ (Corollary  \ref{generic}).
To prove this bound we establish the 
existence of \emph{sweeping} words $w_1,\ldots,w_{n^2}$
of degree $2\lceil \log_g n\rceil$
in $g$ freely noncommuting letters $x_1,\ldots, x_g$. That is,
there exist (symmetric) $n\times n$ matrices $A_1,\ldots, A_g$
such that $w_1(A_1,\ldots,A_g),\ldots, w_{n^2}(A_1,\ldots,A_g)$ span $M_n(\F)$; see Theorem \ref{thm:words}.
Here is a simple corollary. Given 
$g^{2d}\geq n^2$ consider 
the set of all words $w$ of length $2d$ in $g$ matrices $A\in M_n(\F)^g$. 
Vectorize each matrix $w(A)$ and arrange these vectors into a matrix (of size $n^2\times g^{2d}$). Corollary \ref{Ros} shows that 
this matrix is generically  of full rank, generalizing a theorem of Rosenthal \cite{Ros} who established the special case $d=1$.

The key step in the proofs is the construction of 
a special kind of  directed multigraphs with few edge-disjoint walks (Subsection \ref{sec:3.1}).
Another ingredient going into our proofs are
generic matrices and their properties \cite{Row,GRZ}.

The paper is organized as follows. Section \ref{sec:2}
gives notation, preliminaries,
and presents our main results
on sweeping words and lengths in matrix algebras. 
Section \ref{sec:proofs} gives proofs of our results,
including the graph-theoretic construction in Subsection \ref{sec:3.1}.

\subsection*{Acknowledgments}
The authors thank Benoit Collins for several stimulating
discussions, 
Jason P.~Bell for sharing his expertise, and  
Jurij Vol\v ci\v c for carefully reading a preliminary version of the manuscript.

\section{Main results}\label{sec:2}

\subsection{Notation and preliminaries}
Let $\F$ denote an infinite field and $M_n(\F)$ the algebra of $n\times n$ matrices over $\F$. 
We denote the free associative algebra generated by $x_1,\dots,x_g$ by $\F\langle x_1,\dots,x_g\rangle$. 
By $\langle x_1,\dots,x_g\rangle$ we denote the free monoid generated by $x_1,\dots,x_g$, and by $\langle x_1,\dots,x_g\rangle_d$ words in $\langle x_1,\dots,x_g\rangle$ of length $d$. In case $g=2$  we write $x,y$ instead of $x_1,x_2$. The set $\{1,\dots,d\}$ is denoted by $\N_d$.

\subsubsection{Generic matrices and the discriminant}\label{gmn} 
We denote by $\C=\F[x_{ij}^{(k)}\mid 1\leq i,j\leq n, 1\leq k\leq g]$  a commutative polynomial algebra. 
The elements $X_k=(x_{ij}^{(k)})\in M_n(\C)$, $1\leq k\leq g$, are called {\bf generic matrices}. 
The {\bf discriminant} $\Delta(A^{(1)},\dots,A^{(n^2)})$ of $n\times n$ matrices $A^{(1)},\dots,A^{(n^2)}$ is the determinant of the $n^2\times n^2$ matrix whose $k$-th column $v^{(k)}$ is 
the vectorized matrix $A^{(k)}$; i.e., $v^{(k)}_{(n-1)i+j}=A^{(k)}_{ij}$. 

\subsubsection{Locally linearly independent words}
We say that $w_1,\dots,w_m\in \langle x_1,\dots,x_g\rangle$ are {\bf $M_n(\F)$-locally linearly independent} if $w_1(A),\dots,w_m(A)$ are linearly independent for some $A\in M_n(\F)^g$. 
This concept first appeared in \cite{CHSY}, later it was studied algebraically in \cite{BrKl}, and recently in \cite{BPS}. 

Since we assume that $\F$ is infinite we have the following easy observation.

\begin{lemma}\label{cap}
Words $w_1,\dots,w_{n^2}\in \langle x_1,\dots,x_g\rangle$ are $M_n(\F)$-locally linearly independent if and only if the discriminant of $w_1(X_1,\dots,X_g),\dots,w_{n^2}(X_1,\dots,X_g)$ is nonzero.
\end{lemma}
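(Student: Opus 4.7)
The plan is to derive both implications from the same underlying fact: after substituting generic matrices, the discriminant becomes a single polynomial in the commutative algebra $\C$, and a nonzero polynomial over an infinite field admits a nonvanishing evaluation. First I would observe that substituting $X_1,\ldots,X_g$ into each word $w_i$ produces a matrix in $M_n(\C)$; vectorizing these $n^2$ matrices and arranging them as columns gives an $n^2\times n^2$ matrix over $\C$, whose determinant is, by definition, the discriminant $\Delta := \Delta(w_1(X_1,\ldots,X_g),\ldots,w_{n^2}(X_1,\ldots,X_g)) \in \C$.

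Next I would set up the specialization bridge. For any tuple $A=(A_1,\ldots,A_g)\in M_n(\F)^g$, the assignment $x_{ij}^{(k)}\mapsto (A_k)_{ij}$ defines an $\F$-algebra homomorphism $\C\to\F$ under which $w_i(X_1,\ldots,X_g)$ specializes to $w_i(A)$ and $\Delta$ specializes to $\Delta(w_1(A),\ldots,w_{n^2}(A))$. By elementary linear algebra this scalar is nonzero precisely when the vectorized matrices $w_1(A),\ldots,w_{n^2}(A)$ form a basis of $\F^{n^2}$, equivalently when $w_1(A),\ldots,w_{n^2}(A)$ are linearly independent in $M_n(\F)$.

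Both directions now fall out. For $(\Rightarrow)$, a tuple $A$ witnessing local linear independence gives a specialization at which $\Delta$ does not vanish, forcing $\Delta\neq 0$ in $\C$. For $(\Leftarrow)$, assuming $\Delta\neq 0$ in $\C$ and using the standard fact that a nonzero polynomial over an infinite field has a nonzero value at some point of $\F^N$, I would obtain a tuple $A$ at which the discriminant of $w_1(A),\ldots,w_{n^2}(A)$ is nonzero, hence a witness to local linear independence. The only mildly delicate point is the infinite-field hypothesis, which is exactly what licenses the last step; there is no real obstacle here, since the argument reduces to the definition of the discriminant combined with the standard nonvanishing principle for polynomial functions on $\F^N$.
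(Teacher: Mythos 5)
Your argument is correct and is exactly the standard specialization argument the paper has in mind: the paper states the lemma without proof as an ``easy observation'' following from $\F$ being infinite, and your two directions (evaluation at a witness, and nonvanishing of a nonzero polynomial over an infinite field) supply precisely the missing details. Nothing further is needed.
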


\subsection{Main result on words}

\begin{theorem}\label{thm:words}
Let $g\geq 2$ and $d=\lceil{\log_g n}\rceil$. Then there exist 
$M_n(\F)$-locally linearly independent
words
$w_1,\dots,w_{n^2}\in \langle x_1,\dots,x_g\rangle_{2d}$. 
That is, for some $A\in M_n(\F)^g$ the matrices $w_1(A),\dots,w_{n^2}(A)$ are linearly independent and thus span $M_n(\F)$.
\end{theorem}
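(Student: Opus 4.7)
The plan is to apply Lemma~\ref{cap}: it suffices to exhibit a single tuple $A = (A_1,\ldots,A_g) \in M_n(\F)^g$ together with $n^2$ words of length $2d$ whose evaluations at $A$ are linearly independent in $M_n(\F)$, because the discriminant of the evaluations at the generic matrices is then a polynomial nonzero at $A$, hence not identically zero. To produce $A$ combinatorially, I treat a $g$-tuple of $\{0,1\}$-matrices as a directed graph $G$ on the vertex set $[n]$ whose arcs carry one of $g$ colors, with $(A_i)_{jk}=1$ recording a color-$i$ arc $j\to k$. Under this dictionary $(w(A))_{jk}$ equals the number of directed walks from $j$ to $k$ whose edge-color sequence (read in order) spells out the word $w$.

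With this setup, linear independence of the desired evaluations $\{w_{jk}(A)\}_{(j,k)\in[n]^2}$ is equivalent to invertibility of the $n^2 \times n^2$ walk-count matrix
\[
M_{(j,k),(j',k')} \;:=\; (w_{j'k'}(A))_{jk} \;=\; \#\{\text{walks } j\to k \text{ of length } 2d \text{ with color sequence } w_{j'k'}\}.
\]
My plan is a \emph{hub} construction. Fix a root $r\in[n]$; since $g^d\geq n$, build an out-arborescence from $r$ of depth $d$ whose branches reach every vertex $k$ along a distinct length-$d$ color sequence $L_k$, together with an in-arborescence leading every vertex $j$ back to $r$ along a distinct color sequence $R_j$. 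Setting $w_{jk} := R_j L_k$ and $\pi_{jk} := (j\xrightarrow{R_j} r\xrightarrow{L_k} k)$, the diagonal entry $M_{(j,k),(j,k)}$ receives the contribution of $\pi_{jk}$, while distinctness of the $R_j$'s and the $L_k$'s makes $(j,k)\mapsto w_{jk}$ injective.

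The main obstacle, which I expect to be the content of Subsection~\ref{sec:3.1}, is that length-$2d$ walks in $G$ other than the $\pi_{jk}$ (e.g.\ backtrackings, shortcuts, or walks avoiding $r$) may share the color sequence $w_{jk}$ and produce off-diagonal entries in $M$ that could destroy invertibility. The plan is to color the arborescences so that such \emph{parasite} walks are sufficiently rare---this is the meaning I read into the phrase ``directed multigraph with few edge-disjoint walks''---and so that the remaining off-diagonal walk counts appear only for pairs $(j'',k'')$ earlier than $(j,k)$ in some fixed linear order on $[n]^2$. Granted such a coloring, $M$ becomes lower-triangular with nonzero diagonal entries and hence is invertible, so the evaluations $w_{jk}(A)$ are linearly independent in $M_n(\F)$. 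Lemma~\ref{cap} then yields $M_n(\F)$-local linear independence of the $n^2$ words $\{w_{jk}\} \subset \langle x_1,\ldots,x_g\rangle_{2d}$, as required.
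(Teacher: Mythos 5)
Your framework is sound in principle---by Lemma~\ref{cap} a single witness $A$ suffices, and for a $\{0,1\}$-tuple $A$ the entry $w(A)_{jk}$ indeed counts length-$|w|$ walks $j\to k$ spelling $w$, so invertibility of the $n^2\times n^2$ walk-count matrix is equivalent to the desired linear independence. However, the core of the proposal is a \emph{plan}, not a proof: you never construct the coloring, never specify the linear order on $[n]^2$, and never verify that ``parasite'' walks only push mass strictly downward in that order so that $M$ becomes triangular. On $n$ vertices a depth-$d$ out-arborescence of branching $g$ cannot be vertex-disjoint from the in-arborescence once $d\ge 2$ (they would need $\Theta(g^d)$ vertices each), so vertex reuse and backtracking walks are unavoidable, and ruling out their contributions is exactly the hard part. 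As written, this is a genuine gap.

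It is also worth flagging that your guess about Subsection~\ref{sec:3.1} is off. The paper's proof does \emph{not} build explicit $\{0,1\}$-matrices or a hub; it works symbolically with generic matrices, chooses the words to be the entries of the block matrix $a^{(n)}=e_nv_dv_d^te_n$ in~\eqref{a_n}, and pins down a single \emph{witness monomial} $m_n$ in the Leibniz expansion~\eqref{cap-ev} of the discriminant. The multigraph $G_d$ in Subsection~\ref{sec:3.1} is the edge multiset of $m_n$ (it lives on $g^d$ vertices, not $n$, and its $2dg^{2d}$ edges are partitioned into $g^{2d}$ edge-disjoint length-$2d$ walks, one per ordered vertex pair); Lemma~\ref{graf} asserts this partition is \emph{unique}, which forces $m_n$ to occur in $P_n^\sigma$ only for $\sigma=\mathrm{id}$, so it cannot cancel. ``Few edge-disjoint walks'' refers to the tightness of that covering, not to scarcity of off-diagonal walk counts. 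Your triangularity strategy is a legitimately different route and, if completed, would produce an explicit tuple $A$ rather than only a nonvanishing discriminant; but to compare honestly, the paper's uniqueness-of-partition lemma does the work your ``sufficiently rare parasites'' hope is meant to do, and you would still need an analogue of it to close the argument.
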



\subsubsection{Length of a vector space}
Let $V$ be a vector subspace of $M_n(\F)$. By $V^k$ we denote the vector space spanned by the words of length at most $k$ evaluated at $V$. 
The {\bf length} of $V$ is the integer $\ell$ yielding a stationary chain
$$V\subsetneq V^2 \subsetneq \dots \subsetneq V^\ell=V^{\ell +1}.$$ 
We say that words $w_1,\dots,w_t\in \langle x_1,\dots,x_g\rangle$ {\bf sweep $M_n(\F)$}
if there exists  $A\in M_n(\F)^g$ such that $w_1(A),\dots, w_t(A)$ span $M_n(\F)$.
Given a subset $S$ of $M_n(\F)^g$, a vector space $V\subseteq M_n(\F)$ of dimension $g$ 
is {\bf $S$-generic} if it can be spanned by elements $A_1,\dots,A_g$ satisfying $(A_1,\dots,A_g)\in S$.

\begin{corollary}[Generic version of Paz's conjecture]\label{generic}
Let $g\geq 2$ and let $\F$ be an infinite field. 
There exists a nonempty Zariski  open subset $S\subseteq M_n(\F)^g$ such that the length of an $S$-generic  vector subspace of $M_n(\F)$ is of order $O(\log n)$. 
\end{corollary}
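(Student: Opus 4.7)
The plan is to derive Corollary \ref{generic} as a nearly immediate formal consequence of Theorem \ref{thm:words} and Lemma \ref{cap}. Theorem \ref{thm:words} supplies a collection of words $w_1,\dots,w_{n^2}\in\langle x_1,\dots,x_g\rangle_{2d}$, with $d=\lceil\log_g n\rceil$, that are $M_n(\F)$-locally linearly independent. Lemma \ref{cap} then translates this into the statement that the discriminant
\[
\Delta\bigl(w_1(X_1,\dots,X_g),\,\dots,\,w_{n^2}(X_1,\dots,X_g)\bigr)
\]
is a nonzero polynomial in the entries $x_{ij}^{(k)}$ of the generic matrices.

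The first step is to define the desired open set by
\[
S=\bigl\{(A_1,\dots,A_g)\in M_n(\F)^g \st \Delta(w_1(A),\dots,w_{n^2}(A))\neq 0\bigr\}.
\]
Because $\F$ is infinite and the defining polynomial is nonzero, $S$ is a nonempty Zariski open subset of $M_n(\F)^g$.

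The second step is to bound the length of any $S$-generic vector subspace $V\subseteq M_n(\F)$. By definition there exist $A_1,\dots,A_g$ spanning $V$ with $(A_1,\dots,A_g)\in S$. For such $A$, the $n^2$ matrices $w_i(A)$ are linearly independent and hence span $M_n(\F)$. Since each $w_i$ has length $2d$, each $w_i(A)$ is a product of $2d$ elements of $V$ and therefore lies in $V^{2d}$. Consequently $V^{2d}=M_n(\F)$, so the length of $V$ is at most $2d=2\lceil\log_g n\rceil=O(\log n)$, as required.

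There is essentially no obstacle at this stage: once Theorem \ref{thm:words} is in hand, the corollary follows by unwinding the definitions of ``$S$-generic'', ``$V^k$'' and ``length''. The genuine difficulty lies upstream in establishing Theorem \ref{thm:words} itself, which is where the graph-theoretic construction of Subsection \ref{sec:3.1} and the properties of generic matrices do the actual work.
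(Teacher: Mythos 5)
Your proposal is correct and follows the same route as the paper: define $S$ as the non-vanishing locus of the discriminant $\Delta(w_1(X),\dots,w_{n^2}(X))$, use Theorem~\ref{thm:words} together with Lemma~\ref{cap} to see this polynomial is nonzero (hence $S$ is nonempty Zariski open), and conclude that $V^{2d}=M_n(\F)$ for any $S$-generic $V$. The only difference is cosmetic — you explicitly unwind the step ``each $w_i(A)\in V^{2d}$, hence the length is at most $2d$'', which the paper leaves implicit in the phrase ``by the definition of $S$''.
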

Note that if $\F\in\{\RR,\CC\}$, then a nonempty Zariski open subset of $\F^{m}$ is automatically dense in the Euclidean topology.

\subsubsection{Words in random matrices span the full matrix algebra}

By Corollary \ref{generic}, given a $g$-tuple $A$ of random $n\times n$ matrices,
words of length $O(\log n)$ in $A$ span $M_n(\F)$. In particular, we have:

\begin{corollary}\label{Ros}
For each $g$ satisfying $n^2\leq g^{2d}$ there exists a set of $g$ matrices such that words of length $2d$ in those matrices span $M_n(\F)$. 
\end{corollary}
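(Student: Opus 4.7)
The plan is to derive Corollary~\ref{Ros} from Theorem~\ref{thm:words} by a short padding argument. The case $n=1$ is trivial, so I may assume $n\geq 2$ and hence $g\geq 2$. Set $d_0 := \lceil \log_g n \rceil$; the hypothesis $n^2 \leq g^{2d}$ is equivalent to $d \geq d_0$. Theorem~\ref{thm:words} then furnishes $M_n(\F)$-locally linearly independent words $w_1,\ldots,w_{n^2} \in \langle x_1,\ldots,x_g\rangle_{2d_0}$.

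The next step is to upgrade the witnessing tuple $A \in M_n(\F)^g$ so that additionally $A_1$ is invertible. By Lemma~\ref{cap}, the set of $A \in M_n(\F)^g$ for which $w_1(A),\ldots,w_{n^2}(A)$ are linearly independent is the nonvanishing locus of the polynomial $\Delta(w_1(X_1,\ldots,X_g),\ldots,w_{n^2}(X_1,\ldots,X_g)) \in \C$, a nonzero polynomial, and is therefore a nonempty Zariski open subset. The condition $\det A_1 \neq 0$ cuts out another nonempty Zariski open subset. Since $\F$ is infinite, their intersection is nonempty, so I can pick an $A$ satisfying both conditions at once. This simultaneous openness is the only point requiring any care.

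To bump the length from $2d_0$ to $2d$, define $u_i := x_1^{2(d-d_0)} w_i \in \langle x_1,\ldots,x_g\rangle_{2d}$. Then $u_i(A) = A_1^{2(d-d_0)} w_i(A)$, and since $A_1^{2(d-d_0)}$ is invertible, left-multiplication by it preserves linear independence. Hence $u_1(A),\ldots,u_{n^2}(A)$ are $n^2 = \dim M_n(\F)$ linearly independent matrices and therefore span $M_n(\F)$. A fortiori, the full collection of length-$2d$ words evaluated at $A$ spans $M_n(\F)$, which is precisely the assertion of the corollary. There is no substantive obstacle beyond Theorem~\ref{thm:words} itself; the argument is pure bookkeeping on top of it.
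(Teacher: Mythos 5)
Your proof is correct and is essentially the same reduction-to-Theorem~\ref{thm:words} that the paper carries out, but you make explicit a padding step that the paper glosses over. The paper's proof picks $\ol g\leq g$ matrices from the Zariski-open set $S$ of Corollary~\ref{generic} applied with $\ol g=\min\{m:m^d\geq n\}$ variables and takes the remaining $g-\ol g$ matrices arbitrary; but $\lceil\log_{\ol g} n\rceil$ need not equal $d$ (take $n=4$, $d=3$, $\ol g=2$, where $\lceil\log_2 4\rceil=2$), so the theorem in fact supplies words of length $2\lceil\log_{\ol g} n\rceil\leq 2d$ and one still must promote them to length exactly $2d$. You handle this cleanly: you work with all $g$ variables from the start, set $d_0=\lceil\log_g n\rceil$ (equivalently $d\geq d_0$ precisely when $n^2\leq g^{2d}$), intersect the nonvanishing locus of the discriminant with $\{\det A_1\neq 0\}$ (both nonempty Zariski open in affine space over an infinite field, hence their intersection is nonempty), and left-multiply each $w_i$ by $x_1^{2(d-d_0)}$, preserving linear independence because $A_1^{2(d-d_0)}$ is invertible. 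In short, same route, more rigor; a reader of the paper's terse proof would need to supply exactly the invertibility and padding observations you spell out.
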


Corollary \ref{Ros} partially answers a question posed in \cite{Ros}, where this result is established in the case $d=1$. 
The answer is complete for $n\in \N$ satisfying $g^{2d-1}<n^2\leq g^{2d}$.  
The question whether the words of length $2d-1$ sweep $M_n(\F)$ in the case $n^2\leq g^{2d-1}$ remains.

\section{Proofs}
\label{sec:proofs}

The proof of Theorem \ref{thm:words} reduces to the study of a special kind of graphs which we introduce in the first subsection. 
The main ideas can be revealed already in the case of two variables, and the general case is only notationally more difficult, so we focus on $g=2$. 
In the next  proposition we state for convenience this special case separately.

\begin{proposition}\label{prop}
Let $d=\lceil{\log_2n}\rceil$. There exist  $M_n(\F)$-locally linearly independent words $w_1,\dots,w_{n^2}\in \la x,y\ra_{2d}$. 
\end{proposition}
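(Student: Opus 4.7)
My plan is to reduce Proposition~\ref{prop} to a combinatorial problem about labeled walks, and then invoke Lemma~\ref{cap}. Concretely, I will exhibit explicit $0$/$1$ matrices $A,B\in M_n(\F)$ together with $n^2$ words $w_{i,j}\in\la x,y\ra_{2d}$, indexed by the pairs $(i,j)\in\N_n\times\N_n$, such that $w_{i,j}(A,B)=E_{i,j}$ (the matrix unit with a $1$ in position $(i,j)$ and $0$ elsewhere). Since the matrix units form a basis of $M_n(\F)$, this immediately yields $M_n(\F)$-local linear independence. To construct $A$ and $B$, I read them off a directed multigraph $G$ on vertex set $\N_n$ whose edges are colored by $x$ or $y$: set $A_{ij}$ to be the number of $x$-colored edges from $i$ to $j$, and similarly for $B$. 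Then $(w(A,B))_{ij}$ equals the number of length-$2d$ walks in $G$ from $i$ to $j$ whose successive edge colors spell $w$. The target equality $w_{i,j}(A,B)=E_{i,j}$ thus translates into a purely combinatorial statement: $G$ contains exactly one length-$2d$ walk whose label is $w_{i,j}$, and this walk goes from $i$ to $j$.

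\noindent\textbf{The graph construction.}
To make labels control walks, I will arrange $G$ to be \emph{deterministic}: every vertex has at most one outgoing edge of each color, so that a starting vertex and a label determine at most one walk, which may terminate prematurely if a required edge is missing. The plan is to engineer enough missing edges so that, for each chosen label $w_{i,j}$, the walk starting at $i$ completes and ends at $j$, while the walks labeled $w_{i,j}$ starting at every other vertex die before reaching length $2d$. Since there are only $2^{2d}\ge n^2$ available labels and exactly $n^2$ must be used (once each), the label assignment is tight: the ``unused'' portion of $G$ must not accidentally produce a full length-$2d$ walk carrying any of the $w_{i,j}$.

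\noindent\textbf{Main obstacle and conclusion.}
The hard part is the combinatorial construction itself, which is carried out in Subsection~\ref{sec:3.1}. The constraints are delicate: $G$ has only $n$ vertices and, by the determinism condition, at most $2n$ edges, yet it must support $n^2$ completing walks of length $2d=2\lceil\log_2 n\rceil$, collectively covering every pair $(i,j)$, pairwise label-distinct, with no spurious length-$2d$ walk reproducing a chosen label. This is precisely where the approach via directed multigraphs with few edge-disjoint walks, advertised in the introduction, becomes essential: a structural bound on edge-disjoint walks is what gives the leverage to control the number of walks realizing any given label. Once such a $G$ is in hand, the proposition follows formally: take $A$ and $B$ to be its $x$- and $y$-colored adjacency matrices, let $w_{i,j}$ be the label of the unique completing walk from $i$ to $j$, verify $w_{i,j}(A,B)=E_{i,j}$, and invoke Lemma~\ref{cap} to conclude that these words are $M_n(\F)$-locally linearly independent.
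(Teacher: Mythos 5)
Your plan is a genuinely different route from the paper's, and it has a gap that is already fatal at $n=2$.

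The paper does \emph{not} try to make the $n^2$ words evaluate to matrix units, nor does it produce explicit $0/1$ matrices at this stage. It works with \emph{generic} matrices $X,Y$ and, via Lemma~\ref{cap}, reduces to showing that the discriminant $\Delta\bigl(w_1(X,Y),\dots,w_{n^2}(X,Y)\bigr)$ is a nonzero polynomial in the entries $x_{ij},y_{ij}$. Nonvanishing is certified by isolating one monomial $m_n$ and proving it arises in the Leibniz expansion of the determinant from the identity permutation only; that uniqueness is exactly what Lemma~\ref{graf} supplies. The multigraph $G_d$ of Subsection~\ref{sec:3.1} is an object built to encode the exponent vector of $m_n$: it has $2^d$ vertices and $2^{2d+1}d$ edges (already $G_1$ has four $x$-loops at a single vertex), and the lemma asserts a \emph{unique partition of its edge set} into $2^{2d}$ edge-disjoint walks, one for each word of $\langle x,y\rangle_{2d}$. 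That is a different graph, with a different cardinality, serving a different purpose than what your reduction needs. Appealing to Subsection~\ref{sec:3.1} as ``carrying out'' your construction is therefore a misreading.

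More seriously, the object you require does not exist. Your reduction needs a \emph{deterministic} $\{x,y\}$-colored digraph on the $n$ vertices (so at most $2n$ edges, no multiplicities) such that each of the $n^2$ ordered pairs $(i,j)$ is the endpoint pair of exactly one completing length-$2d$ walk, all $n^2$ labels distinct, and no other completing length-$2d$ walk anywhere in $G$ carries one of those labels. Take $n=2$, $d=1$. Then $2^{2d}=4=n^2$, so all four labels $xx,xy,yx,yy$ must be used, forcing each vertex to have exactly two completing length-$2$ walks, ending at $1$ and $2$ respectively. Write $\alpha(v),\beta(v)\in\{1,2,\varnothing\}$ for the $x$- and $y$-successors of $v$. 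If $\alpha(1)=\beta(1)$ then the completing walks from $1$ funnel through a single intermediate vertex and, by determinism, come in pairs sharing their final edge, hence their endpoint, so $(1,1)$ and $(1,2)$ cannot both be covered. If $\alpha(1)\ne\beta(1)$ and both are defined, then two of $xx,xy,yx,yy$ already complete from $1$, so forcing $N_2(1)=2$ deletes both out-edges of vertex $2$, giving $N_2(2)=0$. If exactly one of $\alpha(1),\beta(1)$ is defined, one checks that vertex $2$ must carry both colors, and then $N_2(2)=3$. A symmetric analysis with the roles of $1$ and $2$ swapped covers the remaining cases. In every configuration either a pair $(i,j)$ is missed or a label is duplicated. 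Consequently there is no choice of $0/1$ matrices $A,B\in M_2(\F)$ with $w_{ij}(A,B)=E_{ij}$ of the kind you describe, and the proposal cannot be completed along these lines. The paper avoids this obstruction precisely by not asking the words to hit matrix units, only asking the generic discriminant to be a nonzero polynomial; the multigraph in Lemma~\ref{graf} certifies nonvanishing of one coefficient of that polynomial rather than producing a deterministic automaton.
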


\subsection{Graphs}\label{sec:3.1}
We recursively construct  a family of graphs $(G_d)_{d\in \N}$. 
Let $G_0$ be a graph with one vertex labeled by $1$ and no edges. 
We let $G_d$ be a (directed) graph with $2^d$ vertices labeled by $\N_{2^d}$, and define the edges as follows. There is a (directed) edge from $i$ to $j$  in $G_d$ of multiplicity $4e$, $1\leq i,j\leq {2^{d-1}}$,  if there is an edge of multiplicity $e$ from $i$ to $j$ in $G_{d-1}$. 
Moreover, each vertex $i$, $1\leq i\leq 2^{d-1}$, has additionally $2^{d+1}$ loops, and there are $2^d$ edges from $i$ to $i+2^{d-1}$ and back for $1\leq i\leq 2^{d-1}$. We label the loops by $x$, and other edges by $y$. 
Note that $G_d$ contains $2^{2d+1}d$ edges, half of them labeled by $x$ and the other half by $y$.

For example, the figures below show $G_1$ and $G_2$ with the numbers on edges corresponding to their respective multiplicities, and instead of the labels $x$ and $y$ we use dashed (resp. solid) edges for the edges corresponding to $x$ (resp. $y$).
\vspace{-2cm}
\begin{figure}[H]
\tikzset{every loop/.style={min distance=30mm,looseness=10,in=-60,out=60}}
\begin{center}
\begin {tikzpicture}[-latex ,auto ,node distance =4 cm and 5cm ,on grid ,
semithick ,
state/.style ={ circle ,color =gray ,
draw , text=black, minimum width =.01 cm}]
\node[state] (A)  {$2$};
\node[state] (B) [right of= A] {$1$};
\path (B) edge [dashed,loop right] node[right] {$4$} (B);
\path (A) edge [bend left =25] node[above ] {$2$} (B);
\path (B) edge [bend right = -25] node[below] {$2$} (A);
\end{tikzpicture}
\end{center}
\vspace{-1.5cm}
\caption{$G_1$}
\end{figure}

\begin{figure}[H]
\tikzset{every loop/.style={min distance=30mm,looseness=10,in=45,out=135}}
\begin{center}
\begin {tikzpicture}[-latex ,auto ,node distance =4 cm and 5cm ,on grid ,
semithick ,
state/.style ={ circle ,color =gray ,
draw , text=black, minimum width =.01 cm}]
\node[state] (A)  {$3$};
\node[state] (B) [right of= A] {$1$};
\node[state] (C) [right of= B] {$2$};
\node[state] (D) [right of= C] {$4$};
\path (B) edge [dashed,loop right] node[above] {$24$} (B);
\path (C) edge [dashed,loop right] node[above] {$8$} (C);
\path (A) edge [bend left =25] node[above ] {$4$} (B);
\path (B) edge [bend right = -25] node[below ] {$4$} (A);
\path (B) edge [bend left =25] node[above ] {$8$} (C);
\path (C) edge [bend right = -25] node[below ] {$8$} (B);
\path (C) edge [bend left =25] node[above ] {$4$} (D);
\path (D) edge [bend right = -25] node[below ] {$4$} (C);
\end{tikzpicture}
\end{center}
\caption{$G_2$}
\end{figure}

If $p$ is a walk in the  graph $G_d$ then we associate to it a word corresponding to the labels on the edges passed by $p$ in the respective order.

We now state a technical lemma that will be used extensively in the proof of Proposition \ref{prop}.
\begin{lemma}\label{graf}
There is a unique way of partitioning the graph $G_d$ in $2^{2d}$ (edge-disjoint) walks $p_{ij}$ of length $2d$, $1\leq i,j\leq 2^d$, such that $p_{ij}$ starts at $i$ and ends at $j$, which yield all the words in $\langle x,y\rangle_{2d}$. 
\end{lemma}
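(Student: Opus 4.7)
The plan is to prove the lemma by induction on $d$. The base case $d=0$ is immediate: $G_0$ has a single vertex and no edges, so the unique length-zero walk at vertex $1$ realizes the empty word.

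For the inductive step, factor each $w \in \langle x,y\rangle_{2d}$ uniquely as $w = aub$ with $a, b \in \{x,y\}$ and $u \in \langle x,y\rangle_{2(d-1)}$. The induction hypothesis supplies a unique walk $q_u$ in $G_{d-1}$ from some $i_u$ to $j_u$. Setting
\[
i := \begin{cases} i_u & \text{if } a = x\\ i_u + 2^{d-1} & \text{if } a = y\end{cases},\qquad j := \begin{cases} j_u & \text{if } b = x\\ j_u + 2^{d-1} & \text{if } b = y\end{cases},
\]
I define $p_{ij}$ by prepending to $q_u$ a loop at $i_u$ (when $a = x$) or the cross edge $i \to i_u$ (when $a = y$), and appending the analogous last edge determined by $b$. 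The assignment $w \mapsto (i,j)$ is a bijection onto $\{1,\ldots,2^d\}^2$. Edge-disjointness and full coverage then follow from routine counting: each inherited $G_{d-1}$-edge (multiplicity $\times 4$ in $G_d$) is used by the four walks corresponding to $(a,b) \in \{x,y\}^2$; the $2^{d+1}$ extra loops at each first-half vertex $k$ are consumed as prepended and appended loops ($2^d$ of each); and each cross edge is used exactly its multiplicity $2^d$ of times.

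For uniqueness, let $\mathcal P$ be any valid partition. The pivotal observation is that for each first-half vertex $k$ the edge $k \to k+2^{d-1}$ (of multiplicity $2^d$) is fully consumed by the $2^d$ walks in $\mathcal P$ ending at $k+2^{d-1}$, each of which requires it for the final step. Hence no walk visits a second-half vertex except possibly at its start or end, and a cardinality comparison ($2^{2d-1}$ walks start in the second half $S$, matching the $2^{2d-1}$ words beginning with $y$) forces $w_1 = y \iff v_0 \in S$ and symmetrically for the last letter. This determines every walk's first and last edge. A parallel loop count at each first-half vertex $k$ (the $2^d$ walks starting at $k$ together with the $2^d$ ending at $k$ consume exactly $2^{d+1}$ loops there, matching the extra-loop multiplicity) forces the middle walks to use only inherited $G_{d-1}$-edges.

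The middle walks are then length-$2(d-1)$ walks in the $\times 4$ copy of $G_{d-1}$ sitting inside $G_d$, collectively using each inherited edge four times. Grouping the four middle walks sharing a common pair of first-half endpoints $(k,l)$, a final per-edge count forces all four to share the same middle word, namely the word of $G_{d-1}$'s $p_{kl}$, and the induction hypothesis then identifies each of them with $q_u$, pinning $p_{ij}$ down as in the construction. I expect the main obstacle to be this last step: rigorously arguing that within each endpoint pair the four middle walks cannot have genuinely different middle words, which requires matching the inherited edge usages against the unique inductive partition of $G_{d-1}$ on a per-vertex basis.
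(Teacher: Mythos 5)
Your approach matches the paper's: induct on $d$, use the edge count at second-half vertices to determine the first and last edge of every walk, peel those edges off, and reduce to a partition problem on the inherited copy of $G_{d-1}$. The construction step, the observation that no walk can visit a second-half vertex except at its start or end, the cardinality comparison with words beginning/ending in $y$, and the loop count at first-half vertices are all correct and essentially what the paper does.

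However, the obstacle you flag at the end is a genuine gap, not a routine detail. After removing the outer edges, the four middle walks per endpoint pair $(k,l)$ form a partition of $G_{d-1}^{(4)}$ (the graph $G_{d-1}$ with every multiplicity multiplied by $4$) into $4\cdot 2^{2(d-1)}$ walks, with four walks from $k$ to $l$ for each pair and each word of $\langle x,y\rangle_{2(d-1)}$ appearing four times. The induction hypothesis as you stated it only speaks about $G_{d-1}$ itself, so it does not apply to this scaled graph, and a ``per-edge count'' does not obviously rule out two of the four $k\to l$ walks carrying different middle words while the global word multiplicities are salvaged by a compensating swap among the $k'\to l'$ walks; the constraint ``each word appears four times'' is global, not local to any single edge. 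The paper's fix is to strengthen the induction hypothesis: prove, for every $m\ge 1$, that $G_d^{(m)}$ (all multiplicities scaled by $m$) has a unique partition into $m\cdot 2^{2d}$ walks of length $2d$ with $m$ walks from $i$ to $j$ for each pair $(i,j)$ and each word realized $m$ times. With this formulation, peeling off the outer layer of $G_d^{(m)}$ lands you exactly in $G_{d-1}^{(4m)}$, and the induction closes; the lemma is then the case $m=1$. You should reformulate your claim in this stronger form and re-run your argument, which otherwise goes through unchanged.
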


\begin{proof}
We prove the lemma by induction on $d$. Let us denote by $G_d^{(m)}$ the graph obtained from $G_d$ by multiplying the  multiplicity of each edge by $m$. 

We claim that $G_d^{(m)}$ can be in only one way partitioned into $m2^{2d}$ walks of length $2d$ such that $m$ walks start at $i$ and end at $j$, $1\leq i,j\leq 2^{d}$, and such that each word in $\langle x,y\rangle_{2d}$  corresponds to $m$ walks. 
Consider first $G_1^{(m)}$. Then the only way of obtaining the desired partition is to take $m$ walks $\{2\to 1,1\to 2\}$, $m$ walks $\{1\to 1,1\to 2\}$, $m$ walks $\{2\to 1,1\to1\}$ and $m$ walks $\{1\to 1,1\to 1\}$ as can be easily seen.
Suppose that  the claim holds for all graphs $G_\ell^{(m)}$,  $\ell<d$.  
Consider now $G_d^{(m)}$. Since there are no loops on the vertices labeled by $i$, $2^{d-1}+1\leq i\leq 2^d$, all words with the starting or ending point in these vertices need to begin, respectively end, with $y$. 
By the condition on the partition there are exactly half of walks with this property, thus words with other starting, resp. ending, point need to begin, resp. end, with $x$. 
Removing the edges starting or ending at $i$, $2^{d-1}+1\leq i\leq 2^d$, and $m2^{d+1}$ loops on vertices $i$, $1\leq i\leq 2^{d-1}$, 
we obtain a graph on $2^{d-1}$ vertices labeled by $\N_{2^{d-1}}$ (ignoring the isolated points) which coincides with $G_{d-1}^{(4m)}$ by construction, 
and which we need to partition into $4m2^{2(d-1)}$ walks of length $2(d-1)$ 
(as we have already removed the starting and the ending edge of walks in $G_{d}^{(m)}$) 
such that $4m$ walks start at $i$ and end at $j$, $1\leq i,j\leq 2^{d-1}$, 
and each word in $\langle x,y\rangle_{2d-2}$ corresponds to $4m$ walks. By the induction hypothesis, there is only one such a partition. 
The lemma thus follows by taking $m=1$. 
\end{proof}

For the proof of Theorem \ref{thm:words} we will need a slight generalization of the previous lemma. We thus introduce a graph $G^{g}_d$ which has $g^d$ vertices and is defined recursively by setting $G^g_0$ to be the graph with $1$ vertex labeled by $1$ and no edges. 
Having constructed $G^g_{d-1}$ we let $G^g_d$ be a directed graph with $g^d$ vertices labeled by $\N_{g^d}$, and having a (directed) edge from $i$ to $j$ of multiplicity $g^2e$, $i,j\in \N_{g^{d-1}}$, if there is an edge of multiplicity $e$ from $i$ to $j$ in $G^g_{d-1}$, and is labeled as the corresponding edge in $G^g_{d-1}$, 
and there are $g^d$ edges from $i$ to $i+(k-1)g^{d-1}$ and back for $1\leq i\leq g^{d-1}$, labeled by $x_k$, $1\le k\leq g$  
(for $k=1$ every loop has multiplicity $g^{2d}$).  
Note that $G^g_d$ contains $2dg^{2d}$ edges.

\begin{lemma}\label{graf-g}
There is a unique partition of the graph $G^g_d$ in $g^{2d}$ (edge-disjoint) walks $p_{ij}$ of length $2d$, $1\leq i,j\leq 2^{d}$, such that $p_{ij}$ starts at $i$ and ends at $j$, which yield all the words in $\langle x_1,\dots,x_g\rangle_{2d}$.
\end{lemma}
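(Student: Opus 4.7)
The plan is to mirror the induction used to prove Lemma \ref{graf} in the general $g$-letter setting. As there, I would introduce $G^{g,(m)}_d$, the graph obtained from $G^g_d$ by scaling every edge multiplicity by $m$, and strengthen the claim to the following: for every $m \geq 1$, $G^{g,(m)}_d$ admits a unique partition into $m g^{2d}$ edge-disjoint walks of length $2d$ such that exactly $m$ walks go from $i$ to $j$ for each ordered pair $(i,j) \in \mathbb{N}_{g^d} \times \mathbb{N}_{g^d}$, and each word in $\langle x_1, \dots, x_g \rangle_{2d}$ is realized by exactly $m$ walks. Setting $m=1$ recovers the lemma.

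First I would dispatch the base case $d=1$ by direct analysis. Vertex $k \in \{2,\dots,g\}$ is incident only to the $x_k$-edges connecting it to vertex $1$, so every length-$2$ walk with such an endpoint is forced to begin or end with $x_k$, and the middle step is then determined by where the walk must terminate. A routine count confirms uniqueness and that each of the $g^2$ length-$2$ words is realized by exactly $m$ walks.

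For the inductive step the same mechanism as in Lemma \ref{graf} applies: a walk whose starting (resp.\ ending) vertex lies outside the first block must begin (resp.\ end) with the unique $x_k$-edge to the corresponding first-block vertex. Combined with the requirement that exactly $m$ walks connect each ordered pair, this pins down the first and last letters of all $m g^{2d}$ walks. Removing these forced edges leaves walks of length $2(d-1)$ supported entirely in the first block $\mathbb{N}_{g^{d-1}}$. By the recursive construction of $G^g_d$, the removed edges are precisely the cross-block $x_k$-edges together with the freshly added $x_1$-loops at the first-block vertices, so what remains is exactly $G^{g,(g^2 m)}_{d-1}$. The induction hypothesis, applied at multiplicity $g^2 m$, then supplies the unique partition of the interiors, with each length-$(2d-2)$ word appearing $g^2 m$ times so that each length-$2d$ word of the form $x_k w x_\ell$ arises exactly $m$ times.

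The principal obstacle is the multiplicity bookkeeping in the inductive step: one must verify that the cross-block $x_k$-edges and the new $x_1$-loops are \emph{exactly} exhausted by the forced first and last steps of the $m g^{2d}$ walks, with nothing left over and nothing missing, so that the residual multigraph matches $G^{g,(g^2 m)}_{d-1}$ on the nose. Once this accounting is in place, uniqueness at level $d$ follows from uniqueness of the forced boundary edges together with the inductive uniqueness at level $d-1$, and taking $m=1$ at the very end yields Lemma \ref{graf-g}.
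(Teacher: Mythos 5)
The paper does not actually spell out a proof of this lemma, stating only that it is ``a straightforward modification of Lemma \ref{graf} and is omitted''; your proposal is precisely that modification, carrying over the scaled-multiplicity strengthening, the forced-boundary-edge argument, and the reduction to $G^{g,(g^2m)}_{d-1}$. The bookkeeping you flag as the principal obstacle does close: each first-block vertex is an endpoint of exactly $2mg^d$ forced $x_1$-loop edges (one for each of the $mg^d$ walks starting there and one for each of the $mg^d$ walks ending there), which is exactly the $x_1$-loop multiplicity freshly added in passing from $G^{g,(g^2m)}_{d-1}$ to $G^{g,(m)}_d$, and the cross-block $x_k$-edges are likewise exhausted.
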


The proof of Lemma \ref{graf-g} is a straightforward modification of Lemma \ref{graf} and is omitted.

\subsection{Proof of Theorem \ref{thm:words}}
\begin{proof}[Proof of Proposition \ref{prop}]
By Lemma \ref{cap} we need to show that 
$$p(x_{11},\dots,x_{nn},y_{11},\dots,y_{nn}):=\Delta(w_1(X,Y),\dots,w_{n^2}(X,Y))$$
is nonzero for some $w_1,\dots,w_{n^2}\in \langle x,y\rangle_{2d}$, 
 where $X=(x_{ij})$, $Y=(y_{ij})$ are generic $n\times n$ matrices. 
(We may and we will assume that $X$ is diagonal \cite[Proposition 1.3.15]{Row}.) 
By the definition of the discriminant,
\beq\label{cap-ev}
p(x_{11},\dots,x_{nn},y_{11},\dots,y_{nn})=\sum_{\s\in S_{n^2}}(-1)^\s \prod_{1\leq i,j\leq n}w_{\s(k_{ij})}(X,Y)_{ij},
\eeq
where $k_{ij}=(i-1)n+j$, and $w_{k}(X,Y)_{ij}$ denotes the commutative polynomial at the entry $(i,j)$ of the word $w_k$ evaluated at the generic matrices $X,Y$.

Let us define the lexicographic order on $\langle x,y\rangle$ with $x>y$ and denote by $v_s$ the vector of the 
words of length $s$  listed  decreasingly with respect to this order. 
Let 
\beq\label{a_n}
a^{(n)}=e_nv_dv_d^te_n=
\left(\begin{array}{ll}
xv_{d-1}v_{d-1}^tx&xv_{d-1}v_{d-1}^tye_{n'}\\
e_{n'}yv_{d-1}v_{d-1}^tx&e_{n'}yv_{d-1}v_{d-1}^tye_{n'}
\end{array}\right) 
\eeq
be the block matrix consisting of words, where $n'=n-2^{d-1}$ and $e_{n'}=e_{11}+\dots +e_{n'n'}$, with blocks of the size $2^{d-1}\times 2^{d-1}$, $2^{d-1}\times n'$, $n'\times 2^{d-1}$, and $n'\times n'$, respectively. 

We proceed to find a monomial that appears in the product on the right-hand side of \eqref{cap-ev} for a unique $\s\in S_{n^2}$. 
Let $2^{d-1}<n\leq 2^d$ and $x_i=x_{ii}$. 
We define 
$$
\ol m_{n}^{(ij)}=  \left\{\begin{array}{ll}
  x_ix_j&\quad \text{if $1\leq i,j\leq 2^{d-1}$},\\
  x_i y_{j-2^{d-1},j}&\quad\text{if $1\leq i\leq 2^{d-1},2^{d-1}< j\leq n$},\\
  x_jy_{i,i-2^{d-1}}&\quad\text{if $2^{d-1}< i\leq n,1\leq j\leq 2^{d-1}$},\\
  y_{i,i-2^{d-1}}y_{j-2^{d-1},j}&\quad \text{if $2^{d-1}< i,j\leq n$}.\\
  \end{array}\right.
$$
We further inductively define
$$
m_1^{(11)}=1,\quad m_{n}^{(ij)}=m_{2^{d-1}}^{(i_d,j_d)}\ol m_{n}^{(ij)},
$$
where $i_d\equiv i\mod 2^{d-1}$,  $j_d\equiv j \mod 2^{d-1}$, $1\leq i_d,j_d\leq 2^{d-1}$.
Consider 
the monomial  defined  by 
$$m_n=\prod_{1\leq i,j\leq n} m_{n}^{(ij)}
.$$
In particular, in the case $n=2^d$ we have 
$$m_{2^d}=(m_{2^{d-1}})^4\prod_{1\leq i,j\leq 2^d}\ol m_{2^d}^{(ij)}.$$

Let $w_{(i-1)n+j}=a^{(n)}_{ij}$ for $a^{(n)}$ defined in \eqref{a_n}.  
We claim that $m_n$ appears in  
$$
P_n^\s=\prod_{1\leq i,j\leq n}w_{\s(k_{ij})}(X,Y)_{ij}
$$
only for $\s=\id$. 
 By the construction of $m_n^{(ij)},m_n$ and \eqref{a_n}, 
$m_n^{(ij)}$ has a nonzero coefficient in the commutative polynomial $a^{(n)}_{ij}(X,Y)_{ij}$ and thus 
the same holds for  the monomial $m_n$ in
$$\prod_{1\leq i,j\leq n}w_{k_{ij}}(X,Y)_{ij}=\prod_{1\leq i,j\leq n}a^{(n)}_{ij}(X,Y)_{ij}.$$
It remains to show that $m_n$  does not appear in $P_n^\sigma$ for $\s\neq \id$. For this we use a graph-theoretic language.

We first consider the case $n=2^d$. 
We can present the monomial $m_n$ as a walk in a graph on $n$ vertices, 
in which there is a directed edge of multiplicity $s_{ij}$ between vertices $i$ and $j$ labeled by $y$ if $s_{ij}$ is the degree of $y_{ij}$ in the monomial $m_n$, 
and there are $s_i$ loops on the vertex $i$ labeled by $x$ if $s_i$ is the degree of $x_i$ in $m_n$. Since $m_n$ needs to be written as a product of $n^2$ monomials $u_{ij}$,  
$1\leq i,j\leq n$, 
arising from monomials in $w_{\s(k_{ij})}(X,Y)_{ij}$, $w_{k_{ij}}\in\langle x,y\rangle_{2d}$, 
our problem reduces to finding partitions of the graph associated to $m_n$ into $n^2$ walks $p_{ij}$, $1\leq i,j\leq n$, of length $2d$  that yield all the words in $\langle x,y\rangle_{2d}$. 
Lemma \ref{graf} asserts that there is only one such partition, and thus concludes the proof in the case $n=2^d$. 

For arbitrary $n$ we observe that if 
$$\sum_{\s\in S_{n^2}} (-1)^\s \prod_{1\leq i,j\leq n}a^{(n)}_{\s((i-1)n+j)}(X,Y)_{ij}$$
equaled $0$ then $m_{n}$ would appear in $P_n^{\rm\small id}$ and in some other 
$P_n^\rho$
with $\rho\in S_{n^2}$. 
As $m_n^{(ij)}$ can be identified  with $m_{2^d}^{(ij)}$ for $1\leq i,j\leq n$, $m_{2^d}$ would have a nonzero coefficient in the product $P_{2^d}^\s$ for $\s=\id$ and $\s=\tilde\rho\in S_{2^d}$ (here $\tilde\rho$ is the  permutation in $S_{2^d}$ induced by $\rho$ and fixing all $i>n$), which is impossible by the claim proved in the previous paragraph. 
Thus the words $a^{(n)}_{11},\dots,a^{(n)}_{nn}$ are $M_n(\F)$-locally linearly independent.
\end{proof}

\begin{proof}[Proof of Theorem \ref{thm:words}]
One follows the steps of the proof of Proposition \ref{prop} where initially one needs to consider the case $n=g^d$, and defines the monomial $m_n$ inductively  corresponding to the matrix 
\beq\label{a_gn}
a^{(n)}=e_nv_dv_d^te_n=
\left(\begin{array}{cccc}
x_1v_{d-1}v_{d-1}^tx_1&\dots&x_1v_{d-1}v_{d-1}^tx_{g-1}&x_1v_{d-1}v_{d-1}^tx_ge_{n'}\\
\vdots&\ddots&\vdots&\vdots\\
x_{g-1}v_{d-1}v_{d-1}^tx_1&\dots&x_{g-1}v_{d-1}v_{d-1}^tx_{g-1}&x_{g-1}v_{d-1}v_{d-1}^tx_ge_{n'}\\
e_{n'}x_gv_{d-1}v_{d-1}^tx_1&\dots&e_{n'}x_gv_{d-1}v_{d-1}^tx_{g-1}&e_{n'}x_gv_{d-1}v_{d-1}^tx_ge_{n'}
\end{array}\right), 
\eeq
where $g^{d-1}<n\leq g^d$, $n'=n-g^{d-1}$, and $v_s$ denotes the vector of $g^s$ words of length $s$ listed decreasingly in the monomial order induced by setting $x_1>\dots >x_g$. Instead of applying Lemma \ref{graf} one concludes the proof by applying Lemma \ref{graf-g}.
\end{proof}

\begin{proof}[Proof of Corollary \ref{generic}]
Let $w_1,\dots,w_{n^2}$ be the $M_n(\F)$-locally linearly independent words of length $2d=2\lceil{\log_g n}\rceil$ whose existence was established in Theorem \ref{thm:words}. 
As then the discriminant $\Delta:=\Delta(w_1(X_1,\dots,X_g),\dots,w_{n^2}(X_1,\dots,X_g))$ is nonzero, the subset $S$ of $A\in M_n(\F)^g$ where $\Delta$ does not vanish is a nonempty Zariski open subset, and therefore dense in $M_n(\F)^g$. 
In the case $\F\in\{\RR,\CC\}$, 
$S$ is also dense  in the Euclidean topology.
By the definition of $S$ it follows that every $S$-generic vector subspace of $M_n(\F)$ is of length $O(\log n)$.
\end{proof}

\begin{proof}[Proof of Corollary \ref{Ros}]
Choose $A_1,\dots,A_{\ol g}$ such that $(A_1,\dots,A_{\ol g})\in S\subseteq M_n(\F)^{\ol g}$,
  where $\ol g$ is the smallest among $m\in \N$ satisfying 
$m^{d}\geq n$, 
and $S$ is the set described in the proof of Corollary \ref{generic}. 
For the remaining $A_{\ol g+1},\dots,A_g$ arbitrary $n\times n$ matrices will do. 
\end{proof}

\begin{remark}
(a) It is not difficult to see that one can take 
the matrices in Corollary \ref{Ros} to be
symmetric.
One only needs to note that the proof of Lemma \ref{graf-g} also works for the undirected version of the graphs $G_d^g$ and then use these in the proof of Theorem \ref{thm:words}.

(b)
The proof of Proposition \ref{prop} also leads to an explicit construction of 
$n\times n$ matrices such that words of degree $2d=2\lceil \log_g n\rceil$ in these matrices span $M_n(\F)$. 
We give  an example in characteristic $0$. 
Keep the notation from the proof of Corollary \ref{Ros}.
Let $M=n!(n^{2d-1})^n$. 
We set all variables that do not appear in $m_n$ to zero, and denote by $\C'$ the polynomial algebra in the remaining variables. 
Let us order the variables as follows:
$x^{(k)}_{i,i+(k-1)\bar g^{s-1}}<x^{(\ell)}_{j,j+(k-1)\bar g^{t-1}}$, 
(resp. $x^{(k)}_{i,i+(k-1)\bar g^{s-1}}<x^{(\ell)}_{j+(k-1)\bar g^{t-1},j})$, if 
$(s,k,i)<(t,\ell,j)$ (resp. $(s,k,i)\leq (t,\ell,j)$) in the lexicographic order, 
and take the corresponding lexicographic ordering on $\C'$.
Let 
$$c_1=3, \quad c_s=2\bar g^{s-1}(\bar g-1)+\bar g^{s-2}\; \;(s>1), \quad c=\sum_{s=1}^d c_s.$$ 
We further define for $g^{s-2}<i\leq \bar g^{s-1}$ and $1\leq j\leq \bar g^{s-1}$,
\begin{gather*}
f_{1,s,i^+}=f_{1,s,i^-}=\sum_{t=0}^{s-1} c_t+j \;\;(\bar g^{s-2}<j\leq \bar g^{s-1}),\\
 f_{k,s,j^+}=\sum_{t=0}^{s-1}c_t+\bar g^{s-2}+2\bar g^{s-1}(k-2)+j, \quad f_{k,s,j^-}=\sum_{t=0}^{s-1}c_t+\bar g^{s-2}+2\bar g^{s-1}(k-2)+\bar g^{s-1}+j.
 \end{gather*}
We set 
$$A^{(k)}_{i,i+(k-1)\bar g^{s-1}}=M^{2d(c-f_{k,s,i^+})},$$ 
$$A^{(k)}_{i+(k-1)\bar g^{s-1},i}=M^{2d(c-f_{k,s,i^-})}.$$ 
Since the monomial $m_n$ is the maximal monomial in $\C'$, 
the degree of monomials appearing in $\Delta(w_1(X_1,\dots,X_g),\dots,w_{n^2}(X_1,\dots,X_{\bar g}))$ is $2d$, and there appear at most 
$M$ monomials in $\Delta$ (counted with multiplicity).
It is easy to see that the constructed $A^{(k)}$, $1\leq k\leq  \bar g$, and 
arbitrary $A^{(k)}$, $\bar g<k\leq g$,  have the desired property of Corollary \ref{Ros}. 

(c)
It would be interesting to know whether {\em arbitrary} $n^2$ words in $x,y$ of fixed length $d\geq \lceil{2\log_2n}\rceil$ sweep $M_n(\F)$. 
If the answer were positive then we could deduce that a quasi-identity of $M_n(\F)$ (see \cite{BPS} for the definition) $\sum_M \lambda_M M$ with $\deg M=d$ cannot be a sum of fewer than $n^2$ monomials, and this bound is sharp. This should be seen in contrast with \cite[Exercise 7.2.3]{Row}, stating that a multilinear polynomial identity of $M_n(\F)$ cannot be a sum of fewer than $2^n$ monomials. However, the sharp bound is to the best of our knowledge not known.
\end{remark}

\end{document}